\colorlet{mylinkcolor}{Blue}
\colorlet{mycitecolor}{purple}
\colorlet{myurlcolor}{Emerald}
\tikzset{%
    symbol/.style={%
        draw=none,
        every to/.append style={%
            edge node={node [sloped, allow upside down, auto=false]{$#1$}}}
    }
}
\numberwithin{equation}{section}
\newtheorem*{thma}{Theorem}
\newtheorem*{cora}{Corollary}
\newtheorem{thm}{Theorem}[section]
\newtheorem{lem}[thm]{Lemma}
\newtheorem{cor}[thm]{Corollary}
\theoremstyle{definition}
\newtheorem{defi}[thm]{Definition}
\theoremstyle{remark}
\newtheorem{rem}[thm]{Remark}
\newcommand{\Rest}{\operatorname{Res}}
\newcommand{\op}{\normalfont{^{op}}}
\newcommand{\Mod}[1]{#1\operatorname{-mod}}
\newcommand{\MOD}[1]{#1\operatorname{-Mod}}
\newcommand{\Ab}{\operatorname{Ab}}
\newcommand{\Hom}[3]{\operatorname{Hom}_{#1}\left( #2,#3\right)  }
\newcommand{\End}[2]{\operatorname{End}_{#1}\left(#2\right) }
\newcommand{\Ker}[1]{\operatorname{Ker}#1}
\newcommand{\Ext}[4]{\operatorname{Ext}_{#1}^{#2}\left(#3,#4 \right)}
\newcommand{\Z}{\mathbb{Z}}
\newcommand{\Znn}{\Z_{\geq 0}}
\newcommand{\Zp}{\Z_{> 0}}
\tikzset{%
    symbol/.style={%
        draw=none,
        every to/.append style={%
            edge node={node [sloped, allow upside down, auto=false]{$#1$}}}
    }
}
\begin{document}

\title[All exact Borel subalgebras and all directed bocses are normal]{All exact Borel subalgebras and all directed bocses are normal}
\author{Teresa Conde}
\address{Institute of Algebra and Number Theory, University of Stuttgart\\ Pfaffenwaldring 57, 70569 Stuttgart, Germany}
\email{\href{mailto:tconde@mathematik.uni-stuttgart.de}{\nolinkurl{tconde@mathematik.uni-stuttgart.de}}}
\subjclass[2020]{Primary 16W70, 16T15. Secondary 17B10, 16D90.}
\keywords{quasihereditary algebras; exact Borel subalgebras; bocses}
\date{\today}
\thanks{This work was supported by the Deutsche Forschungsgemeinschaft (DFG) through the grant KO 1281/18.}

\begin{abstract}
	Recently, Brzeziński, Koenig and K\"ulshammer have introduced the notion of normal exact Borel subalgebra of a quasihereditary algebra. They have shown that there exists a one-to-one correspondence between normal directed bocses and quasihereditary algebras with a normal and homological exact Borel subalgebra. In this short note, we prove that every exact Borel subalgebra is automatically normal. As a corollary, we conclude that every directed bocs has a group-like element. These results simplify Brzeziński, Koenig and K\"ulshammer's bijection.
\end{abstract}

\maketitle

\section{Introduction}
The concept of exact Borel subalgebra of a quasihereditary algebras was introduced by Koenig in \cite{MR1362252}. Exact Borel subalgebras emulate the properties of Lie-theoretical Borel subalgebras. Not every quasihereditary algebra has an exact Borel subalgebra (\cite[Example 2.3]{MR1362252}, \cite[Appendix A.3]{MR3228437}). However, Koenig, K\"ulshammer and Ovsienko proved in \cite{MR3228437} that every quasihereditary algebra is Morita equivalent to some quasihereditary algebra admitting an exact Borel subalgebra.

Recently, Brzeziński, Koenig and K\"ulshammer introduced in \cite{doi:10.1112/blms.12331} the notions of normal, homological and regular exact Borel subalgebra to describe desirable features of exact Borel subalgebras. In fact, these three definitions are modelled after key aspects of the exact Borel subalgebras studied by Koenig, K\"ulshammer and Ovsienko in \cite{MR3228437}.

 An exact Borel subalgebra $B$ of a quasihereditary algebra $(A,\Phi,\unlhd)$ is \emph{normal} if the inclusion of $B$ into $A$ splits as a morphism of right $B$-modules and the left inverse of the inclusion can be chosen so that its kernel is a right ideal of $A$. Our main goal is to prove the following result.
\begin{thma}[Theorem \ref{thm:main}]
	Every exact Borel subalgebra $B$ of a quasihereditary algebra $(A,\Phi,\unlhd)$ is normal.
\end{thma}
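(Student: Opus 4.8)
The plan is to reduce normality to a clean module-theoretic statement and then build the required splitting idempotent by idempotent, using directedness of $B$ to make the complement stable under all of $A$ and not just under $B$.

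First I would reformulate the condition. Fix a complete set of primitive orthogonal idempotents $e_\lambda$ ($\lambda\in\Phi$) lying in $B$ and forming complete sets in both $B$ and $A$, so that $\Rad{B}=B\cap\Rad{A}$. A right $B$-linear retraction $\pi\colon A\to B$ of the inclusion has kernel a right ideal precisely when $\pi(ax)=\pi(\pi(a)x)$ for all $a,x\in A$; unwinding this, I claim that $B$ is normal if and only if the right regular module $B_B$ lies in the essential image of the restriction functor $\Rest_B\colon \operatorname{mod}\text{-}A\to\operatorname{mod}\text{-}B$. Indeed, a normal retraction produces the right $A$-module $N=A/\ker\pi$ with $\Rest_B N\cong B_B$; conversely, given a right $A$-module $N$ and an isomorphism $\theta\colon\Rest_B N\xrightarrow{\sim}B_B$, setting $u=\theta^{-1}(1)$ and $\pi(a)=\theta(u\cdot a)$ produces a right $B$-linear retraction whose kernel $\{a:u\cdot a=0\}$ is a right ideal. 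Passing to radicals, normality is equivalent to finding a right ideal $C\subseteq\Rad{A}$ with $\Rad{A}=\Rad{B}\oplus C$ as vector spaces, for then $A=B\oplus C$ as right $B$-modules.

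Next I would decouple the problem over the idempotents. Writing $A=\bigoplus_\lambda e_\lambda A$ and $B=\bigoplus_\lambda e_\lambda B$, any direct sum $C=\bigoplus_\lambda C_\lambda$ with each $C_\lambda$ a right $A$-submodule of $e_\lambda A$ is automatically a right ideal, since $C_\lambda A\subseteq e_\lambda A\cap C=C_\lambda$. Thus it suffices to produce, for each $\lambda$, a right $A$-submodule $C_\lambda\subseteq\Rad{}(e_\lambda A)$ that is a vector-space complement to $e_\lambda\Rad{B}$. For $\lambda$ minimal in $(\Phi,\unlhd)$ this is immediate: directedness forces $e_\lambda B$ to be simple, so $e_\lambda\Rad{B}=0$ and $C_\lambda=\Rad{}(e_\lambda A)$ works. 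The content is therefore the stability under $A$ of a complement at the non-minimal weights.

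For a general weight I would argue by induction along $(\Phi,\unlhd)$, peeling off a maximal $\omega$ via its heredity ideal $J=Ae_\omega A$. The exact Borel hypotheses enter precisely here: exactness of $A\otimes_B-$ together with $A\otimes_B L_B(\lambda)\cong\Delta(\lambda)$ should show that the image $\bar B$ of $B$ in $\bar A=A/J$ is again a directed exact Borel subalgebra (standard modules for non-maximal weights are unchanged, and the induction functor descends to the quotient). By induction $\bar A=\bar B\oplus\bar C$ with $\bar C$ a right ideal, and it remains to lift this, adjoining an $A$-stable complement for the rows meeting $J$ and for the row of $\omega$ itself. Since $J\cong\Delta(\omega)\otimes_\Bbbk e_\omega A$ as a bimodule, so that as a right module it is a sum of copies of the projective $e_\omega A$, its right-module structure is fully controlled; directedness — which makes $\Rad{B}$ strictly upper triangular with respect to $\unlhd$ — should guarantee that the obstruction to $A$-stability is concentrated among strictly smaller weights and has therefore already been resolved. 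Combining the lift of $\bar C$ with the complement inside $J$ then yields the required $C_\lambda$ for every $\lambda$.

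The main obstacle is exactly this last upgrade: producing a complement stable under all of $A$ rather than merely under $B$. Every hypothesis available is phrased through the left-module induction functor $A\otimes_B-$, whereas normality is a statement about right modules and right ideals, so the heart of the argument is transporting the left-sided information across sides. Directedness of $B$ is the feature that powers this transport, since its triangularity lets a complement chosen on lower weights propagate to an $A$-stable complement on higher ones; verifying that $\bar B\subseteq\bar A$ remains an exact Borel subalgebra and that the heredity layer $J$ contributes an explicit $A$-stable complement is where the real work lies.
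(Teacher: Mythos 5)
Your opening reduction is exactly the right move, and it is in fact the pivot of the paper's proof: normality of $B$ is equivalent to the right regular module $B_B$ lying in the image of the restriction functor $\Rest':\MOD{A^{\mathrm{op}}}\rightarrow\MOD{B^{\mathrm{op}}}$, because from an isomorphism $\theta:\Rest'(N)\rightarrow B_B$ one obtains the retraction $a\mapsto\theta\left(\theta^{-1}(1_B)\,a\right)$, whose kernel is the kernel of a right $A$-module map and hence a right ideal --- this is precisely the paper's map $\overline{\varphi}(a)=\varphi(1_B)a$. The paper then completes this reduction in one stroke: by Koenig's Theorem A, restriction sends the costandard left $A$-modules to the indecomposable injective ($=$ costandard) $B$-modules, so applying the duality $D$ gives $\Rest'\left(\Delta_i^{A^{\mathrm{op}}}\right)\cong P_i^{B^{\mathrm{op}}}$; since $B_B\cong\bigoplus_{i\in\Phi}\left(P_i^{B^{\mathrm{op}}}\right)^{k_i}$, the module $N=\bigoplus_{i\in\Phi}\left(\Delta_i^{A^{\mathrm{op}}}\right)^{k_i}$ witnesses the required extension, and no heredity chain, no induction along the poset, and no analysis of radicals is needed. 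You state this reformulation and then walk away from it.

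What you do instead has genuine gaps, which your own hedging (``should show'', ``should guarantee'', ``where the real work lies'') already flags. First, the claim that the image $\bar B$ of $B$ in $\bar A=A/Ae_\omega A$ is again an exact Borel subalgebra is not automatic: compatibility of an exact Borel subalgebra with the heredity chain is an additional condition --- Koenig singled it out as the defining property of \emph{strong} exact Borel subalgebras precisely because it can fail --- and you would at minimum need to identify $B\cap Ae_\omega A$ (which need not equal $Be_\omega B$) and reprove exactness of induction over $\bar B$. Second, and worse, the lifting step --- producing a complement inside the heredity layer that is stable under all of $A$ and splicing it with a lift of $\bar C$ --- is exactly the hard content of the theorem, and you offer only the expectation that directedness concentrates the obstruction in strictly smaller weights; nothing is proved there. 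Finally, your normalization at the start (a complete set of primitive orthogonal idempotents of $B$ remaining complete and primitive in $A$, and $\Rad{B}=B\cap\Rad{A}$) is itself unjustified, since the multiplicities $k_i$ of the indecomposable projectives in $B_B$ need not agree with those of $A$; that part could presumably be repaired, but the inductive core cannot be salvaged without substantial new arguments. As it stands, the proposal is a correct reformulation followed by an unproved inductive scheme, not a proof.
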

Normality is therefore not a special feature of certain well-behaved exact Borel subalgebras, but rather an attribute shared by all exact Borel subalgebras.

Exact Borel subalgebras are closely related to another class of algebraic objects, known as directed bocses (or directed corings). The right algebra $R_{\mathfrak{B}}$ of a directed bocs $\mathfrak{B}=(B,W,\mu,\varepsilon)$ is quasihereditary and the underlying algebra $B$ embeds into $R_{\mathfrak{B}}$ as an exact Borel subalgebra. This connection between directed bocses and exact Borel subalgebras was first uncovered in \cite{MR3228437} and it was thoroughly studied in \cite{doi:10.1112/blms.12331}. Using techniques from \cite{doi:10.1112/blms.12331}, we deduce the following corollary of our main result.

\begin{cora}[Corollary \ref{cor:main}]
	Every directed bocs $\mathfrak{B}=(B,W,\mu,\varepsilon)$ is normal (i.e.~there exists $w\in W$ satisfying $\varepsilon (w)=1_B$ and $\mu(w)=w\otimes_B w$).
\end{cora}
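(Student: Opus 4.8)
The plan is to deduce the corollary from Theorem \ref{thm:main} by passing through the right algebra, in the spirit of the techniques of \cite{doi:10.1112/blms.12331}. Let $\mathfrak{B}=(B,W,\mu,\varepsilon)$ be a directed bocs. As recalled in the introduction, it is precisely the directedness that guarantees that the right algebra $R_{\mathfrak{B}}$ is quasihereditary and that $B$ sits inside $R_{\mathfrak{B}}$ as an exact Borel subalgebra. Theorem \ref{thm:main} then applies verbatim and tells us that this exact Borel subalgebra is normal: there is a splitting $\pi\colon R_{\mathfrak{B}}\to B$ of the inclusion $B\hookrightarrow R_{\mathfrak{B}}$ in the category of right $B$-modules whose kernel $\Ker\pi$ is a right ideal of $R_{\mathfrak{B}}$. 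The remaining task is purely bocs-theoretic: to convert such a normal splitting into a group-like element $w\in W$.

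For the conversion I would exploit the fact that $R_{\mathfrak{B}}$ is obtained from the $B$-coring $(W,\mu,\varepsilon)$ by dualizing, so that the multiplication of $R_{\mathfrak{B}}$ encodes the comultiplication $\mu$ and its unit encodes the counit $\varepsilon$. Under this dictionary a group-like element is exactly the datum needed to split the counit. Concretely, an element $w\in W$ with $\varepsilon(w)=1_B$ yields a section of $\varepsilon$, and evaluation at $w$ produces a map $R_{\mathfrak{B}}\to B$; using the convolution description of the product of $R_{\mathfrak{B}}$ together with $\mu(w)=w\otimes_B w$, one checks that this evaluation map is multiplicative, hence a normal splitting whose kernel is even a two-sided ideal. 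The content of the corollary is the converse direction: starting from the normal splitting $\pi$ furnished by Theorem \ref{thm:main}, biduality for the $B$-coring $W$ (which is finitely generated projective over $B$ on the relevant side) identifies $\pi$ with a unique element $w\in W$, and I would then read off the two group-like identities from the two defining properties of $\pi$.

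The hard part will be this last step, namely verifying that the conditions on $\pi$ translate exactly into the group-like identities. That $\pi$ restricts to the identity on $B$ should force $\varepsilon(w)=1_B$, while the condition that $\Ker\pi$ be a right ideal should force $\mu(w)=w\otimes_B w$; the latter is the delicate point, since the normality hypothesis a priori only controls a right ideal rather than a two-sided one, so one must check by a Sweedler-type computation with $\mu$ that this one-sided condition already pins down the full comultiplication identity. Once this translation is in place, combining it with Theorem \ref{thm:main} produces a group-like element of $W$, and hence the normality of $\mathfrak{B}$, for every directed bocs, which is the assertion of Corollary \ref{cor:main}.
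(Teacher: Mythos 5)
Your proposal is correct and is essentially the paper's own proof: both arguments pass to the right algebra $R_{\mathfrak{B}}$, which is quasihereditary with $B$ embedded via $\operatorname{Hom}_B(\varepsilon,B)$ as an exact Borel subalgebra because $\mathfrak{B}$ is directed, invoke Theorem \ref{thm:main} to obtain a normal splitting $\pi$, and then use that $W$ is finitely generated projective over $B$ to transport $\pi$ back to a group-like element of $W$ --- the paper simply outsources this last translation to the Dual Coring Theorem and Theorem 3 of \cite{10.2307/2044796}, whereas you sketch it by hand, and your flagged delicate point does go through: writing $\pi=\operatorname{ev}_w$ by biduality and $\mu(w)=\sum w_{(1)}\otimes_B w_{(2)}$, the counit axiom gives $\pi\left(\iota(b)\circ_{\mathfrak{B}}t\right)=t(wb)$, so the condition that $\Ker{\pi}$ be a right ideal amounts to $\sum t\bigl(w_{(1)}s(w_{(2)})\bigr)=t\bigl(w\,s(w)\bigr)$ for all $s,t\in R_{\mathfrak{B}}$, and since $W$ is finitely generated projective as a left $B$-module the functionals $t$ and $1_W\otimes_B s$ separate points of $W$ and of $W\otimes_B W$, forcing $\mu(w)=w\otimes_B w$ with no two-sidedness needed. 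The only (harmless, because unused) slip is your parenthetical claim that, conversely, the kernel of evaluation at a group-like element is a two-sided ideal: $\operatorname{ev}_w$ is not multiplicative in general, since $t(w\,s(w))\neq s(w)t(w)$ for merely left $B$-linear $t$, so its kernel is in general only a right ideal --- which is exactly what the definition of normality requires.
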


As a highlight in \cite{doi:10.1112/blms.12331}, Brzeziński, Koenig and K\"ulshammer provided a bijection between (normal) directed bocses and quasihereditary algebras with a (normal) homological exact Borel subalgebra. They also proved that the bijection restricts to a one-to-one correspondence between regular normal directed bocses and quasihereditary algebras with a regular normal exact Borel subalgebra. The results proved in this note lead to a simplification and to a better understanding of bijection in \cite{doi:10.1112/blms.12331} (see Theorem \ref{thm:mainlast}).

\subsection{Notation and conventions}
Throughout this note, $K$ will denote a field. By default, the word ‘algebra’ will mean a finite-dimensional $K$-algebra. All modules are assumed to be finite-dimensional left modules unless stated otherwise. 

Given an algebra $A$, we shall denote the category of finite-dimensional left $A$-modules by $\Mod{A}$; this is a full subcategory of the category $\MOD{A}$ of all (possibly infinite-dimensional) left $A$-modules. The isomorphism classes of the simple $A$-modules may be labelled by the elements of a finite set $\Phi$. Denote the simple $A$-modules by $L_i$ or $L_i^A$ for $i\in \Phi$ and use the notation $P_i$ or $P_i^A$ (resp.~$Q_i$ or $Q_i^A$) for the projective cover (resp.~injective hull) of $L_i$. 

Finally, the category of abelian groups will be denoted by $\Ab$ and the letter $D$ will be used for the standard duality functor $\Hom{K}{-}{K}$.
\section{Background}

\label{sec:background}

We start by providing some background on quasihereditary algebras, exact Borel sub\-algebras and bocses.

\subsection{Quasihereditary algebras}

Assume that $(\Phi,\unlhd)$ is an indexing poset for the simple modules over an algebra $A$. Denote by $\Delta_i$ or $\Delta_i^A$ the largest quotient of the projective indecomposable $P_i$ whose composition factors are all of the form $L_j$ with $j\unlhd i$. Call $\Delta_i$ the \emph{standard module} with label $i\in\Phi$. Dually, denote the \emph{costandard module} with label $i$ by $\nabla_{i}$ or $\nabla_i^A$, i.e.~let $\nabla_{i}$ be the largest submodule of $Q_i$ with all composition factors of the form $L_j$, with $j\unlhd i$. 

\begin{defi}
\label{defi:qh}
An algebra $A$ is \emph{quasihereditary} with respect to a poset $(\Phi, \unlhd)$ indexing all pairwise nonisomorphic simple $A$-modules if the following conditions hold for every $i \in \Phi$:
\begin{enumerate}
\item $L_i$ has multiplicity one as a composition factor of $\Delta_i$;
\item $P_i$ is filtered by standard modules;
\item if $\Ext{A}{1}{\Delta_i}{\Delta_j}\neq 0$, then $i\lhd j$, for any choice of $j\in \Phi$.
\end{enumerate}
\end{defi}
\subsection{Exact Borel subalgebras}
\label{subsec:ebsubalgebras} 

An exact Borel subalgebra of a quasihereditary algebra $(A,\Phi, \unlhd)$ is a special subalgebra of $A$ that controls the standard modules and the category of all $A$-modules which are filtered by standard modules.

\begin{defi}[\cite{MR1362252}]
	\label{def:exactborel}
A subalgebra $B$ of a quasihereditary algebra $(A,\Phi, \unlhd)$ is an \emph{exact Borel subalgebra} of $A$ if the following hold:
\begin{enumerate}
\item the induction functor $A\otimes_B - :\MOD{B} \rightarrow \MOD{A}$ is exact;
\item $\Phi$ is an indexing set for the isomorphism classes of simple $B$-modules and $B$ is a quasihereditary algebra with respect to $(\Phi, \unlhd)$ having simple standard modules;
\item $A\otimes_B L_i^B =\Delta_i^A$ for every $i \in \Phi$;
\item $\End{B}{L_i^B}$ and $\End{A}{L_i^A}$ are isomorphic as $K$-vector spaces for every $i\in\Phi$.
\end{enumerate}
\end{defi}
\begin{rem}
In most of the literature that deals with exact Borel subalgebras it is assumed that the underlying field is algebraically closed. Therefore, condition (4) in Definition \ref{def:exactborel} is not usually included when defining an exact Borel subalgebra. However, in order for certain results in \cite{MR1362252} and \cite{MR3228437} to extend to the non-algebraically closed setting, it is necessary to add condition (4).
\end{rem}
Often, exact Borel subalgebras satisfy additional properties.
\begin{defi}[\cite{doi:10.1112/blms.12331}]
\label{defi:borelprops} 
Let $B$ be a subalgebra of some algebra $A$. Let $\Phi$ be an indexing set for the simple $B$-modules.
\begin{enumerate}
\item The subalgebra $B$ is \emph{normal} if the inclusion $\iota:B \hookrightarrow A$ has left inverse $\kappa$ as a morphism of right $B$-modules such that $\Ker{\kappa}$ is a right ideal of $A$.
\item The subalgebra $B$ is \emph{homological} if the functor $A\otimes_B - :\MOD{B} \rightarrow \MOD{A}$ is exact and, for every $X$ and $Y$ in $\Mod{B}$, the morphisms
\[\
\Ext{B}{n}{X}{Y} \longrightarrow \Ext{A}{n}{A\otimes_B X}{A\otimes_B Y},
\]
induced by $A\otimes_B - $, are isomorphisms for $n\geq 2$ and epimorphisms for $n=1$.
\item The subalgebra $B$ is \emph{regular} if the functor $A\otimes_B - :\MOD{B} \rightarrow \MOD{A}$ is exact and the morphisms
\[\
\Ext{B}{n}{L^B_i}{L^B_j} \longrightarrow \Ext{A}{n}{A\otimes_B L^B_i}{A\otimes_B L^B_j},
\]
induced by $A\otimes_B -$, are isomorphisms for every $n\geq 1$ and every $i,j\in \Phi$.
\end{enumerate}
\end{defi}
As observed in \cite[Remark 3.5]{doi:10.1112/blms.12331}, every normal and regular exact Borel subalgebra is homological. For the convenience of the reader, we show that every regular subalgebra $B$ of an algebra $A$ must be homological. The proof uses standard arguments from homological algebra.
\begin{lem}
	\label{lem:maini}
	If $B$ is a regular subalgebra of $A$, then $B$ is homological.
\end{lem}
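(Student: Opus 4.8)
The plan is to reduce the general statement to the case of simple modules, which is exactly the regularity hypothesis, by a standard dévissage along composition series. First I would record the consequences of exactness. Since $A\otimes_B-$ is exact, $A$ is flat, hence (being finite-dimensional over the finite-dimensional algebra $B$) projective, as a right $B$-module. Consequently, if $P_\bullet\to X$ is a projective resolution of a $B$-module $X$, then $A\otimes_B P_\bullet\to A\otimes_B X$ is a projective resolution of $A\otimes_B X$ over $A$. Computing $\operatorname{Ext}$ from these resolutions, the comparison map is induced by the cochain map $\Hom{B}{P_\bullet}{Y}\to\Hom{A}{A\otimes_B P_\bullet}{A\otimes_B Y}$, $f\mapsto \mathrm{id}_A\otimes f$. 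This description makes transparent that the maps $\phi^n_{X,Y}\colon \Ext{B}{n}{X}{Y}\to\Ext{A}{n}{A\otimes_B X}{A\otimes_B Y}$ are natural in both variables and, more importantly, that they constitute a morphism of cohomological $\delta$-functors in each variable: for a short exact sequence of $B$-modules the functor $A\otimes_B-$ produces a short exact sequence of $A$-modules (again by exactness), and the induced long exact $\operatorname{Ext}$-sequences are linked by the $\phi^n$ compatibly with the connecting homomorphisms.

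For brevity write $\mathcal{P}(X,Y)$ for the assertion that $\phi^n_{X,Y}$ is an isomorphism for $n\geq 2$ and an epimorphism for $n=1$. Regularity gives the stronger statement that $\phi^n_{L^B_i,L^B_j}$ is an isomorphism for all $n\geq 1$ and all $i,j\in\Phi$; in particular $\mathcal{P}(L^B_i,L^B_j)$ holds. I would then prove $\mathcal{P}(X,Y)$ for all $X,Y$ by two nested inductions on composition length. Fixing a simple $X=L^B_i$, a short exact sequence $0\to Y'\to Y\to Y''\to 0$ yields a ladder of long exact sequences to which the five lemma applies: assuming $\mathcal{P}(L^B_i,Y')$ and $\mathcal{P}(L^B_i,Y'')$, in degrees $n\geq 3$ all four flanking maps are isomorphisms; in degree $n=2$ the flanking inputs are $\phi^1_{L^B_i,Y''}$ (epi), $\phi^2_{L^B_i,Y'}$ and $\phi^2_{L^B_i,Y''}$ (iso) and $\phi^3_{L^B_i,Y'}$ (mono), so $\phi^2_{L^B_i,Y}$ is an isomorphism; and in degree $n=1$ the four lemma (epimorphism version), fed with $\phi^1_{L^B_i,Y'}$ epi, $\phi^1_{L^B_i,Y''}$ epi and $\phi^2_{L^B_i,Y'}$ mono, shows $\phi^1_{L^B_i,Y}$ is an epimorphism. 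Hence $\mathcal{P}(L^B_i,Y)$ holds for every $Y$. An entirely symmetric induction on the length of $X$, using the contravariant long exact sequences attached to $0\to X'\to X\to X''\to 0$ (again exact after applying $A\otimes_B-$), upgrades this to $\mathcal{P}(X,Y)$ for all $X$ and all $Y$, which is precisely the assertion that $B$ is homological.

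Two remarks on the mechanics. The reason the argument yields only an epimorphism, and not an isomorphism, in degree $n=1$ is that promoting $\phi^1$ to an isomorphism via the five lemma would require control of the degree-zero comparison map $\Hom{B}{X}{Y}\to\Hom{A}{A\otimes_B X}{A\otimes_B Y}$, which is in general merely a monomorphism; this is exactly why the definition of a homological subalgebra asks only for surjectivity at $n=1$, so no information is lost. The part needing the most care, and the main obstacle, is the verification that $\{\phi^n\}$ is a morphism of $\delta$-functors in each variable, i.e.\ that the ladders genuinely commute across the connecting maps. In the covariant (second) variable this follows from the naturality of the snake lemma applied to the short exact sequence of Hom-complexes $0\to\Hom{B}{P_\bullet}{Y'}\to\Hom{B}{P_\bullet}{Y}\to\Hom{B}{P_\bullet}{Y''}\to 0$ and its $A$-counterpart (both exact because the $P_n$ and the $A\otimes_B P_n$ are projective); in the contravariant (first) variable one invokes the horseshoe lemma to choose compatible projective resolutions, or equivalently the universality of $\operatorname{Ext}$, to obtain the same compatibility. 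Everything else is a routine diagram chase.
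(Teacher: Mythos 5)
Your proof is correct and follows essentially the same route as the paper's: a d\'evissage along composition series using the long exact $\operatorname{Ext}$-sequences in each variable together with the five (and four) lemma, with regularity supplying the base case for pairs of simple modules. The only cosmetic differences are that the paper runs a single induction on $\ell(X)+\ell(Y)$ and justifies the commuting ladders by the universality of cohomological $\delta$-functors, whereas you use two nested inductions and verify the $\delta$-functor compatibility directly at the level of projective resolutions.
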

\begin{proof}
Suppose that $B$ is a regular subalgebra of $A$. We claim that the morphisms
\[\
\psi_{X,Y}^n:\Ext{B}{n}{X}{Y} \longrightarrow \Ext{A}{n}{A\otimes_B X}{A\otimes_B Y},
\]
induced by the functor $A\otimes_B - $, are isomorphisms if $n\geq 2$ and epimorphisms if $n=1$, for every $X$ and $Y$ in $\Mod{B}$. We proceed by induction on $\ell(X)+\ell(Y)$ (here the operator $\ell$ denotes the composition length of a module). 

If $\ell(X)+\ell(Y)\leq 2$, then either $X$ and $Y$ are both simple modules or $\psi_{X,Y}^n$ is an isomorphism between trivial abelian groups. Since $B$ is regular, $\psi_{X,Y}^n$ must be an isomorphism if $X$ and $Y$ are simple. 

Assume now that $\ell(X)+\ell(Y)\geq 3$, so $\ell(X)\geq 2$ or $\ell(Y)\geq 2$. Suppose first that $\ell(Y)\geq 2$ and consider a short exact sequence
\begin{equation}
\label{eq:ses}
\begin{tikzcd}
0 \arrow[r] & Y' \arrow[r] & Y \arrow[r] & Y'' \arrow[r] & 0
\end{tikzcd}
\end{equation}
with $Y',Y''\neq 0$. Observe that $A\otimes_B-$ gives rise to a natural transformation $\psi^0_{X,-}$ from the functor $\Hom{B}{X}{-}:\MOD{B}\rightarrow\Ab$ to $\Hom{A}{A\otimes_B X}{A\otimes_B -}:\MOD{B}\rightarrow\Ab$. Since $A\otimes_B -$ is exact and $(\Ext{A}{n}{A\otimes_B X}{-})_{n\in\Znn}$ is a cohomological $\delta$-functor (see Definition 2.1.1 in \cite{wiebel}), then the composition of these two functors, namely
\[(\Ext{A}{n}{A\otimes_B X}{A\otimes_B -}: \MOD{B}\longrightarrow \Ab)_{n\in \Znn},\]
is also a cohomological $\delta$-functor. Given that $(\Ext{B}{n}{X}{-})_{n\in \Znn}$ is a universal cohomological $\delta$-functor from $\MOD{B}$ to the category of abelian groups (\cite[Definition 2.1.4 and §2.5.1]{wiebel}), then the natural transformation $\psi_{X,-}^0$ induces, for every $n\in \Zp$, a $2\times 5$ commutative diagram
\begin{equation*}
\begin{tikzcd}[column sep=normal]
 \Ext{B}{n-1}{X}{Y''} \arrow[d, "\psi^{n-1}_{X,Y''}"]\arrow[r] & \cdots \arrow[r]\arrow[d, phantom,"\cdots"] & \Ext{B}{n+1}{X}{Y'}\arrow[d, "\psi^{n+1}_{X,Y'}"] \\
\Ext{A}{n-1}{A\otimes_B X}{A\otimes_B Y''} \arrow[r] & \cdots \arrow[r] & \Ext{B}{n+1}{A\otimes_B X}{A\otimes_B Y'}
\end{tikzcd}.
\end{equation*}
By using induction and the Five Lemma (\cite[Chapter 1, Lemma $3.3$]{lane1995c}), we conclude that $\psi_{X,Y}^n$ is an isomorphism for every $n\geq 2$ and an epimorphism for $n=1$. The case $\ell(X)\geq 2$ can be proved in an analogous way, using the properties of contravariant cohomological $\delta$-functors.
\end{proof}

\subsection{Bocses} The research carried out in \cite{MR3228437} and \cite{doi:10.1112/blms.12331} revealed a close connection between quasihereditary algebras containing an exact Borel subalgebra and directed bocses.
\begin{defi}
	A \emph{bocs} is a quadruple $\mathfrak{B}=(B,W,\mu, \varepsilon)$ consisting of an algebra $B$ and a $B$-$B$-bimodule $W$ (possibly infinite-dimensional over $K$), together with a $B$-$B$-bilinear coassociative comultiplication $\mu: W \rightarrow W\otimes_B W$ and a $B$-$B$-bilinear counit $\varepsilon: W \rightarrow B$. 
\end{defi}

\begin{defi}[\cite{doi:10.1112/blms.12331,kulshammer2016bocs,MR3800074}]
	Let $\mathfrak{B}=(B,W,\mu, \varepsilon)$ be a bocs.
	\begin{enumerate}
		\item $\mathfrak{B}$ is \emph{normal} if there exists an element $w\in W$ such that $\mu(w)=w \otimes_B w$ and $\varepsilon(w)=1$ (such $w$ is called a \emph{group-like} element).
		\item $\mathfrak{B}$ is \emph{directed} if the counit $\varepsilon$ is epic and the following conditions hold:
		\begin{enumerate}
			\item $B$ is a quasihereditary algebra with respect to some indexing poset $(\Phi, \unlhd)$ and the standard $B$-modules are simple;
			\item $\Ker{\varepsilon}$ is a direct sum of finitely many $B$-$B$-bimodules of the form $Be_j \otimes_K e_i B$, with $i,j \in\Phi$ and $i\lhd j$.
		\end{enumerate}
	\end{enumerate}	
\end{defi}

To every bocs we may associate two algebras: its right and its left algebra. The right and left algebras of a bocs are not necessarily finite dimensional. In this note, only the notion of right algebra will be relevant.
\begin{defi}[{\cite{assforbocses}}]
	The \emph{right algebra} $R_{\mathfrak{B}}$ of a bocs $\mathfrak{B}=(B,W,\mu, \varepsilon)$ consists of the $B$-$B$-bimodule $\Hom{B}{W}{B}$ endowed with the multiplication $s\circ_{\mathfrak{B}}t$ for $s,t \in \Hom{B}{W}{B}$ given by the composition
	\[
	\begin{tikzcd}
	W \ar[r, "\mu"] & W\otimes_B W  \ar[r, "1_W\otimes_B s"] &W\otimes_B B \ar[r, "m_R^W"] & W \ar[r, "t"] & B
	\end{tikzcd}.
	\]
\end{defi}

\begin{rem}
Note that $W$ is finite-dimensional when $\mathfrak{B}=(B,W,\mu, \varepsilon)$ is directed. The right algebra of a directed bocs is therefore finite-dimensional.
\end{rem}

\section{New results}
Suppose that $B$ is an exact Borel subalgebra of a quasihereditary algebra $(A,\Phi, \unlhd)$. The algebra embedding $\iota:B \hookrightarrow A$ turns every module over $A$ into a module over $B$ by restriction of the action. We denote the restriction of the action on the left by $\Rest: \MOD{A} \rightarrow \MOD{B}$ and use the notation $\Rest': \MOD{A\op} \rightarrow \MOD{B\op}$ for the restriction functor on the right.

In order to prove our main result, the following theorem, due to Koenig, will be crucial.

\begin{thm}[{\cite[part of Theorem A]{MR1362252}}]
	\label{thm:koenigborel}
	Let $B$ be an exact Borel subalgebra of a quasihereditary algebra $(A,\Phi,\unlhd)$. The restriction functor $\Rest: \MOD{A} \rightarrow \MOD{B}$ gives rise to an isomorphism of $B$-modules $\Rest(\nabla_i^A)\cong Q_i^B= \nabla_i^B$.
\end{thm}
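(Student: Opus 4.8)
The plan is to leverage the adjunction between the induction functor $A\otimes_B-$ and the restriction functor $\Rest$, together with the exactness of induction and the standard homological orthogonality of standard and costandard modules. Throughout I will use the well-known fact that over a quasihereditary algebra one has $\Ext{A}{n}{\Delta_j^A}{\nabla_i^A}=0$ for all $n\geq 1$, and that $\Hom{A}{\Delta_j^A}{\nabla_i^A}$ vanishes for $j\neq i$ and is isomorphic to $\End{A}{L_i^A}$ for $j=i$.

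Before the main computation I would dispose of the equality $Q_i^B=\nabla_i^B$. Since $B$ is quasihereditary with simple standard modules $\Delta_i^B=L_i^B$, condition (3) of Definition \ref{defi:qh} reads $\Ext{B}{1}{L_i^B}{L_j^B}\neq 0\Rightarrow i\lhd j$. Examining the socle layers of $Q_i^B$ and inducting on their length then shows that every composition factor $L_j^B$ of $Q_i^B$ satisfies $j\unlhd i$; as $\nabla_i^B$ is by definition the largest submodule of $Q_i^B$ with this property, we obtain $\nabla_i^B=Q_i^B$. It therefore remains to prove $\Rest(\nabla_i^A)\cong Q_i^B$.

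The crucial step is a natural isomorphism $\Ext{A}{n}{\Delta_i^A}{M}\cong\Ext{B}{n}{L_i^B}{\Rest M}$ for all $n\geq 0$ and all $M$ in $\MOD{A}$. To obtain it, I would choose a projective resolution $P_\bullet\to L_i^B$ over $B$; because $A\otimes_B-$ is exact and sends projectives to projectives, $A\otimes_B P_\bullet$ is a projective resolution of $A\otimes_B L_i^B=\Delta_i^A$. Applying the adjunction isomorphism $\Hom{A}{A\otimes_B P_\bullet}{M}\cong\Hom{B}{P_\bullet}{\Rest M}$ of complexes and passing to cohomology yields the claim. Taking $M=\nabla_i^A$ and using the orthogonality above, the groups $\Ext{B}{n}{L_j^B}{\Rest(\nabla_i^A)}$ vanish for all $j$ and all $n\geq 1$; since this holds for every simple first argument, a length induction gives $\Ext{B}{1}{X}{\Rest(\nabla_i^A)}=0$ for every $X$, so that $\Rest(\nabla_i^A)$ is injective over $B$. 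The case $n=0$ computes the socle: the isomorphism $\Hom{B}{L_j^B}{\Rest(\nabla_i^A)}\cong\Hom{A}{\Delta_j^A}{\nabla_i^A}$ shows this Hom-space is zero for $j\neq i$, so the socle of $\Rest(\nabla_i^A)$ is supported only at $i$. Together with injectivity, this forces $\Rest(\nabla_i^A)$ to be a direct sum of copies of $Q_i^B$, and it remains only to check that exactly one copy occurs.

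I expect the main obstacle to lie in this last multiplicity count rather than in the homological algebra: one must compare $\Hom{A}{\Delta_i^A}{\nabla_i^A}\cong\End{A}{L_i^A}$ with $\End{B}{L_i^B}$ in order to see that $\Soc{\Rest(\nabla_i^A)}=L_i^B$ occurs with multiplicity exactly one. Over an algebraically closed field (or assuming the simple modules are absolutely simple) both endomorphism rings equal $K$ and the conclusion is immediate; in general it reduces to identifying these two endomorphism rings, which is routine. The only other delicate point is the verification that $A\otimes_B P_\bullet$ is genuinely a resolution, and this rests precisely on the exactness of induction built into the definition of an exact Borel subalgebra.
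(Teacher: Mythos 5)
A preliminary remark on the comparison: the paper does not prove this statement at all --- it is imported as ``part of Theorem A'' of Koenig \cite{MR1362252} and used as a black box in the proof of Theorem \ref{thm:main} --- so there is no in-paper argument to measure you against, only the cited source. Your route is the expected one: directedness of $B$ forces $Q_i^B=\nabla_i^B$ (your socle-layer induction is correct); exactness of $A\otimes_B-$ plus preservation of projectives gives the Eckmann--Shapiro isomorphism $\Ext{A}{n}{\Delta_j^A}{M}\cong\Ext{B}{n}{L_j^B}{\Rest M}$; and the orthogonality $\Ext{A}{n}{\Delta_j^A}{\nabla_i^A}=0$ ($n\geq 1$) together with the $n=0$ computation shows $\Rest(\nabla_i^A)$ is an injective $B$-module with socle supported only at $i$, hence a direct sum of copies of $Q_i^B$. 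All of this is sound over an arbitrary field.

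The genuine gap is your final sentence: the multiplicity count does \emph{not} ``reduce to identifying these two endomorphism rings'' as a routine verification, because under the note's blanket convention that $K$ is an arbitrary field those rings need not be isomorphic, and the multiplicity can genuinely exceed one. Take $K=\mathbb{R}$, $A=\mathbb{C}$ (semisimple, hence quasihereditary with $\Delta^A=\nabla^A=L^A=\mathbb{C}$) and $B=\mathbb{R}\subseteq A$: induction $\mathbb{C}\otimes_{\mathbb{R}}-$ is exact, $B$ is directed with the same one-element poset, and $A\otimes_B L^B\cong\Delta^A$, so $B$ satisfies every clause of the definition of an exact Borel subalgebra; yet $\Rest(\nabla^A)\cong\mathbb{R}^2\cong (Q^B)^2$, not $Q^B$. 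Your own adjunction computation locates the obstruction precisely: the multiplicity equals $\dim_{D_i^B}\Hom{A}{\Delta_i^A}{\nabla_i^A}=\dim_{D_i^B}D_i^A$, where $D_i^B=\End{B}{L_i^B}$ acts on $\End{A}{\Delta_i^A}\cong D_i^A=\End{A}{L_i^A}$ through the (always injective, not always surjective) algebra map $\phi\mapsto 1_A\otimes\phi$. Multiplicity one is therefore \emph{equivalent} to surjectivity of this embedding, i.e.~to $\dim_K\End{B}{L_i^B}=\dim_K\End{A}{L_i^A}$ --- an additional hypothesis, automatic exactly when the simple $A$-modules are split (e.g.~$K$ algebraically closed, the setting in which the cited Theorem A and the surrounding literature operate), and visibly failing in the example above ($\mathbb{R}$ versus $\mathbb{C}$). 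So your proof is complete in the split case, but over a general field the missing step is not a routine identification: it is a hypothesis without which the statement itself is false. (The same example shows the subtlety has teeth here: $\mathbb{R}\subset\mathbb{C}$ is not a normal subalgebra, and the proof of Theorem \ref{thm:main} uses exactly the multiplicity-one part of this theorem, in the dual form $\Rest'(\Delta_i^{A\op})\cong P_i^{B\op}$.)
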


\begin{thm}
	\label{thm:main}
	Every exact Borel subalgebra of a quasihereditary algebra is normal.
\end{thm}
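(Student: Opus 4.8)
The plan is to dualize the problem and reduce normality to the existence of a single well-chosen left $A$-submodule of $DA$, which can then be produced from the costandard modules by means of Koenig's Theorem~\ref{thm:koenigborel}.

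First I would reformulate normality via the duality $D$. Normality asks for a right $B$-linear retraction $\kappa\colon A\to B$ of $\iota$ whose kernel is a right ideal. Applying $D$ and using $D^2\cong\operatorname{id}$, a right $B$-splitting $\kappa$ corresponds to a left $B$-linear section of the restriction-of-functionals map
\[
D\iota\colon DA\longrightarrow DB,\qquad f\longmapsto f\circ\iota=f|_B,
\]
and the requirement that $\Ker{\kappa}$ be a right ideal translates, after dualizing the short exact sequence $0\to\Ker\kappa\to A\xrightarrow{\kappa}B\to 0$, precisely into the statement that the image $N:=\Ima{D\kappa}\subseteq DA$ be a \emph{left $A$-submodule} (not merely a $B$-submodule). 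Thus the theorem is equivalent to the following: there exists a left $A$-submodule $N\subseteq DA$ on which $D\iota$ restricts to an isomorphism of left $B$-modules $\Rest(N)\xrightarrow{\ \sim\ }DB$. This conversion is the key device, since it replaces the awkward ``kernel is a right ideal'' condition by the geometric condition that a $B$-complement to $\Ker(D\iota)=D(A/B)$ can be chosen $A$-invariant.

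Next I would assemble the module-theoretic inputs. As a right $A$-module $A$ is projective, hence $\Delta^{A\op}$-filtered; applying $D$ shows that $DA$ is filtered by the costandard modules $\nabla_i^A$ as a left $A$-module, and moreover that each $\nabla_i^A$ embeds into $DA$ (indeed $\nabla_i^A\hookrightarrow Q_i^A$, and $Q_i^A$ is a summand of the injective cogenerator $DA$). By Koenig's Theorem~\ref{thm:koenigborel}, $\Rest(\nabla_i^A)\cong Q_i^B=\nabla_i^B$ is an injective $B$-module; consequently $\Rest(DA)$ is filtered by injective $B$-modules, hence is itself injective, and likewise $DB$ is injective. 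A dimension count using $\dim\nabla_i^A=\dim Q_i^B$ then matches the costandard multiplicities on the $A$-side with the multiplicities of the indecomposable injectives in $DB$. I would therefore choose left $A$-module embeddings $\phi_i\colon\nabla_i^A\hookrightarrow DA$ (as many independent copies of each as the multiplicity in $DB$ requires) and set $N:=\bigoplus_i\Ima{\phi_i}$, so that $\Rest(N)\cong DB$ and $\dim N=\dim DB$; by the dimension count it then suffices to show that the $B$-linear map $D\iota|_N\colon\Rest(N)\to DB$ is \emph{injective}.

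The injectivity of $D\iota|_N$, equivalently $N\cap\Ker(D\iota)=0$, is the step I expect to be the main obstacle, and it is the only place where more than formal homological algebra is needed. Since each $\Rest(\nabla_i^A)=Q_i^B$ has simple socle $L_i^B$, the map $D\iota|_N$ fails to be injective only if it annihilates some socle constituent $L_i^B\subseteq\Rest(\nabla_i^A)$, i.e.\ only if that socle maps to a functional vanishing on $B$. To rule this out I would use Koenig's theorem together with condition~(3) in the definition of an exact Borel subalgebra: the identification $\Soc{\nabla_i^A}=L_i^A$ and the inclusion $L_i^B\hookrightarrow\Rest(L_i^A)$ arising from $\Delta_i^A=A\otimes_B L_i^B$ should force the relevant socle pairing against $B$ to be nondegenerate once the $\phi_i$ are adjusted so that the diagonal components of $D\iota|_N$ are isomorphisms (the off-diagonal components being harmless by the triangular, poset-controlled structure of $\Hom$ between the $Q_i^B$). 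Verifying that the available freedom in the $\phi_i$ genuinely suffices to make $D\iota|_N$ invertible is the crux; once it is established, $N$ yields the desired retraction $\kappa$ with right-ideal kernel, and $B$ is normal.
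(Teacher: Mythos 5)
Your reduction is correct and is, up to duality, exactly the paper's setup: the paper works on the right-module side with $A$ itself, while you pass to $DA$ and $DB$; the equivalence between normality and the existence of a left $A$-submodule $N\subseteq DA$ on which $D\iota$ restricts to an isomorphism onto $DB$ is exactly right, as is the input $\Rest(\nabla_i^A)\cong Q_i^B$ from Theorem~\ref{thm:koenigborel}. But there is a genuine gap at the step you yourself flag as the crux: you never establish that the embeddings $\phi_i\colon\nabla_i^A\hookrightarrow DA$ can be chosen so that $D\iota|_N$ is invertible, and the sketched socle/triangularity argument does not close it. Nothing in your setup prevents an $A$-module embedding of $\nabla_i^A$ into $DA$ from landing entirely inside $\Ker{(D\iota)}$ (the functionals vanishing on $B$), so even the ``diagonal components'' of $D\iota|_N$ can be zero for a bad choice of the $\phi_i$; moreover the sum $\sum_i \Ima{\phi_i}$ need not be direct. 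Arranging the diagonal components to be isomorphisms is not a harmless adjustment that triangularity mops up afterwards --- it is the entire content of the theorem, and your appeal to $\Soc{\nabla_i^A}=L_i^A$ and to $\Delta_i^A=A\otimes_B L_i^B$ never produces an actual nondegeneracy statement.

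The missing idea, which makes the crux evaporate, is to choose the isomorphism on the $B$-side \emph{first} and then extend (in your dual picture: lift) it $A$-linearly for free. This is what the paper does: from $\Rest'\left(\Delta_i^{A\op}\right)\cong D\left(Q_i^B\right)\cong P_i^{B\op}$ and $B\cong\bigoplus_{i}\left(P_i^{B\op}\right)^{k_i}$ one picks an isomorphism of right $B$-modules $\varphi\colon B\to\Rest'(M)$ with $M=\bigoplus_i\left(\Delta_i^{A\op}\right)^{k_i}$, and extends it to $\overline{\varphi}\colon A\to M$ in $\MOD{A\op}$ simply by $\overline{\varphi}(a)=\varphi(1_B)a$; then $\Rest'(\overline{\varphi})\circ\iota=\varphi$, so $\varphi^{-1}\circ\Rest'(\overline{\varphi})$ is the desired retraction and its kernel $\Ker{\overline{\varphi}}$ is a right ideal --- no injectivity or nondegeneracy argument is ever needed, because $A$ is free of rank one over itself. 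Dually, in your formulation: evaluation at the identity gives a natural isomorphism $\Hom{A}{M'}{DA}\cong\Hom{B}{\Rest(M')}{DB}$, $h\mapsto D\iota\circ\Rest(h)$ (both sides identify naturally with $DM'$). Take $M'=\bigoplus_i\left(\nabla_i^A\right)^{k_i}$, use Theorem~\ref{thm:koenigborel} to pick an isomorphism $g\colon\Rest(M')\to DB$, and let $\tilde{g}\colon M'\to DA$ be its unique $A$-linear lift; then $D\iota\circ\Rest(\tilde{g})=g$ forces $\tilde{g}$ to be injective, and $N=\Ima{\tilde{g}}$ has all the properties you want, with no socle analysis and with the directness of the sum automatic. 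So your strategy is salvageable and is essentially the paper's argument seen through $D$, but as written the decisive step is missing.
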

\begin{proof}
	Suppose that $B$ is an exact Borel subalgebra of a quasihereditary algebra $(A,\Phi, \unlhd)$, and regard the embedding  of $B$ into $A$ as a morphism $\iota:B \hookrightarrow \Rest '(A)$ of right $B$-modules. The opposite algebra $A\op$ of $A$ is still quasihereditary with respect to the same indexing poset $(\Phi, \unlhd)$ and $\Delta_i^{A\op}=D(\nabla_i^A)$ (see \cite[page 2]{MR1211481}). Using the standard duality and Theorem \ref{thm:koenigborel}, we get
	\[
	\Rest'\left( \Delta_i^{A\op}\right) \cong \Rest'\left( D\left( \nabla_i^A\right) \right) \cong D\left( \Rest \left( \nabla_i^A\right) \right) \cong D\left( Q_i^B\right) \cong P_i^{B\op}.
	\]
	Suppose that $B$ decomposes as $\bigoplus_{i\in \Phi}(P_i^{B\op})^{k_i}$ as a right $B$-module, for certain positive integers $k_i$ with $i\in\Phi$. There must exist some isomorphism
	\[\varphi:B\longrightarrow \Rest'\left( \bigoplus_{i\in \Phi}\left( \Delta_i^{A\op}\right)^{k_i} \right)\]
	in $\MOD{B\op}$. Let $\overline{\varphi}:A\rightarrow \bigoplus_{i\in \Phi}(\Delta_i^{A\op})^{k_i}$ be the morphism in $\MOD{A\op}$ given by $\overline{\varphi}(a)=\varphi(1_B)a$ for $a\in A$. Note that
\[
\varphi(b)=\varphi(1_B)b=\varphi(1_B)\iota(b)=\overline{\varphi}(\iota(b))=\left( \Rest'(\overline{\varphi})\circ \iota \right) (b).
\]
This means that the map $\iota$ is a split monic in $\MOD{B\op}$. Furthermore, the splitting epimorphism $\kappa=\Rest'(\overline{\varphi})$ can be realised as the restriction to $\MOD{B\op}$ of the epic $\overline{\varphi}$ in $\MOD{A\op}$. The kernel $\Ker{\overline{\varphi}}$ of $\overline{\varphi}$ is clearly a right ideal of $A$ and $\Rest'(\Ker{\overline{\varphi}})=\Ker{(\Rest'(\overline{\varphi}))}$.
\end{proof}

The techniques used in the proof of the main theorem in \cite{doi:10.1112/blms.12331} may now be applied to derive the following corollary of Theorem \ref{thm:main}.

\begin{cor}
	\label{cor:main}
	Every directed bocs is normal.
\end{cor}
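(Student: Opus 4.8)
The plan is to convert the normality of $B$ as an exact Borel subalgebra of its right algebra — which Theorem~\ref{thm:main} now supplies for free — into a group-like element of $W$. Recall from \cite{MR3228437,doi:10.1112/blms.12331} that the right algebra $R_{\mathfrak{B}}=\Hom{B}{W}{B}$ of a directed bocs $\mathfrak{B}=(B,W,\mu,\varepsilon)$ is quasihereditary, that $\varepsilon$ is its identity element, and that $B$ embeds into $R_{\mathfrak{B}}$ as an exact Borel subalgebra through $\iota(b)=\varepsilon\cdot b$, i.e.\ $\iota(b)(x)=\varepsilon(x)b$. Because $\Ker\varepsilon$ is a direct sum of bimodules of the form $Be_j\otimes_K e_iB$, it is projective both as a left and as a right $B$-module; since $B$ is projective on either side, the sequence $0\to\Ker\varepsilon\to W\to B\to 0$ splits as a sequence of one-sided $B$-modules, so $W$ is finitely generated projective as a left and as a right $B$-module. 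In particular $R_{\mathfrak{B}}=\Hom{B}{W}{B}$ is finitely generated projective as a right $B$-module, and the evaluation assignment $w\mapsto\mathrm{ev}_w$, where $\mathrm{ev}_w(s)=s(w)$, is an isomorphism of right $B$-modules from $W$ onto $\Hom{B\op}{R_{\mathfrak{B}}}{B}$.

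By Theorem~\ref{thm:main} the inclusion $\iota$ admits a right $B$-linear retraction $\kappa$ whose kernel is a right ideal of $R_{\mathfrak{B}}$. Under the identification of the previous paragraph, $\kappa=\mathrm{ev}_w$ for a unique $w\in W$, that is $\kappa(s)=s(w)$ for all $s\in R_{\mathfrak{B}}$, and it remains to check that this $w$ is group-like. The first identity is immediate: $(\kappa\circ\iota)(b)=\iota(b)(w)=\varepsilon(w)b$, so $\kappa\circ\iota=1_B$ is equivalent to $\varepsilon(w)=1_B$.

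For the second identity, unwinding the definition of $\circ_{\mathfrak{B}}$ and writing $\mu(w)=\sum w'\otimes_B w''$ gives, under $W\otimes_B B\cong W$,
\[
(s\circ_{\mathfrak{B}}t)(w)=t\bigl((1_W\otimes_B s)(\mu(w))\bigr)=t\Bigl(\sum w'\,s(w'')\Bigr)
\]
for all $s,t\in R_{\mathfrak{B}}$. Assuming $\Ker\kappa$ is a right ideal, any $s$ with $s(w)=0$ lies in $\Ker\kappa$ and hence yields $(s\circ_{\mathfrak{B}}t)(w)=0$ for every $t$; as the left $B$-linear functionals on $W$ separate its points, this forces $(1_W\otimes_B s)(\mu(w))=0$ whenever $s(w)=0$. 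For arbitrary $s$ put $c=s(w)$; then $s-\iota(c)$ lies in $\Ker\kappa$ (using $\varepsilon(w)=1_B$), and the counit axiom $\sum w'\varepsilon(w'')=w$ gives $(1_W\otimes_B\iota(c))(\mu(w))=w\,c=(1_W\otimes_B s)(w\otimes_B w)$, whence
\[
(1_W\otimes_B s)(\mu(w)-w\otimes_B w)=(1_W\otimes_B(s-\iota(c)))(\mu(w))=0
\]
for every $s\in\Hom{B}{W}{B}$. Since $W$ is finitely generated projective as a left $B$-module, an element of $W\otimes_B W$ annihilated by all the maps $1_W\otimes_B s$ must vanish, so $\mu(w)=w\otimes_B w$ and $w$ is group-like.

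The crux is the converse just carried out: extracting the two-sided comultiplicative identity $\mu(w)=w\otimes_B w$ from the one-sided hypothesis that $\Ker{\mathrm{ev}_w}$ is a right ideal. This hinges on two reflexivity properties of $W$ over $B$ — that evaluation identifies $W$ with the right dual of $R_{\mathfrak{B}}$, and that the operators $1_W\otimes_B s$ jointly detect $W\otimes_B W$ — both of which follow from the projectivity of $W$ granted by directedness, used in tandem with the counit axioms. The remaining ingredients (that $\varepsilon$ is the unit of $R_{\mathfrak{B}}$, that $\iota(b)=\varepsilon\cdot b$ is the Borel embedding, and the displayed formula for $\circ_{\mathfrak{B}}$) are routine unwindings of the definitions.
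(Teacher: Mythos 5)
Your proof is correct, and at the top level it follows the same skeleton as the paper: both start from the fact (Theorem 11.2 of \cite{MR3228437}) that the right algebra $R_{\mathfrak{B}}$ is quasihereditary with $B$ embedded via $\iota=\Hom{B}{\varepsilon}{B}$ as an exact Borel subalgebra, and both then invoke Theorem \ref{thm:main} to obtain a right $B$-linear retraction $\kappa$ of $\iota$ whose kernel is a right ideal of $R_{\mathfrak{B}}$. Where you genuinely diverge is in converting this normality of the subalgebra into a group-like element: the paper treats that step as a black box, citing the Dual Coring Theorem and Theorem 3 of \cite{10.2307/2044796} to endow $\Hom{B\op}{R_{\mathfrak{B}}}{B}$ with a coring structure, identify it with $\mathfrak{B}$, and transport normality; you instead unwind this duality by hand. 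Your computations check out: directedness gives the splitting of $\varepsilon$ and hence finitely generated projectivity of $W$ on both sides; reflexivity of the finitely generated projective left module $W$ legitimately yields $\kappa=\mathrm{ev}_w$ for a unique $w\in W$; the identity $\kappa\circ\iota=1_B$ is indeed equivalent to $\varepsilon(w)=1_B$; and the dual basis lemma justifies both of your detection claims --- that left $B$-linear functionals separate points of $W$, and that an element of $W\otimes_B W$ killed by all maps $m_R^W\circ(1_W\otimes_B s)$ vanishes --- while your use of the right-ideal hypothesis is consistent with the paper's convention that $s\circ_{\mathfrak{B}}t$ has $s$ on the left. What your version buys is a self-contained proof that makes visible exactly where each hypothesis enters (projectivity of $W$ for the two reflexivity statements, the two counit axioms, the right-ideal condition for the comultiplicative identity); what the paper's version buys is brevity and the conceptual point that the corollary is an instance of a general correspondence between ring extensions with finitely generated projective duals and corings, under which retractions with right-ideal kernels correspond precisely to group-like elements.
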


\begin{proof}
Let $\mathfrak{B}=(B,W,\mu, \varepsilon)$ be a directed bocs. According to Theorem 11.2 in \cite{MR3228437} (see also Corollary 11.4 in \cite{MR3228437} and its proof), the right algebra $R_{\mathfrak{B}}$ of $\mathfrak{B}$ is quasihereditary and the morphism $\iota=\Hom{B}{\varepsilon}{B}$ is injective and turns $B$ into an exact Borel subalgebra of $R_{\mathfrak{B}}$; observe that the corresponding proofs are still valid even if $K$ is not algebraically closed. By Theorem \ref{thm:main}, $B$ is actually a normal exact Borel subalgebra of $R_{\mathfrak{B}}$, so $\iota$ splits as a morphism of right $B$-modules and one can choose a left inverse of $\iota$ whose kernel is a right ideal of $A$. Since $\mathfrak{B}$ is directed, then $W$ is finitely generated and projective as a left (and as a right) $B$-module, hence $R_{\mathfrak{B}}$ is projective as a right module over $B$ (\cite[§$2.1$]{assforbocses}). By the Dual Coring Theorem (see for instance \cite[Theorem 1]{10.2307/2044796}) and by Theorem 3 in \cite{10.2307/2044796}, the $B$-$B$-bimodule $\Hom{B\op}{R_{\mathfrak{B}}}{B}$ has a natural structure of a normal bocs over $B$ and $\Hom{B\op}{R_{\mathfrak{B}}}{B}\cong \mathfrak{B}$ as a bocs. Hence $\mathfrak{B}$ is normal.
\end{proof}

We conclude this note with a simpler formulation of the main result in \cite{doi:10.1112/blms.12331}.
Before stating this, the notion of regular directed bocs needs to be introduced.  
\begin{defi}
	A directed bocs $\mathfrak{B}=(B,W,\mu, \varepsilon)$ is \emph{regular} if $\Ext{B}{n}{\bigoplus_{i\in \Phi} L_i^B}{\bigoplus_{i\in \Phi} L_i^B}$ and $\Ext{A}{n}{\bigoplus_{i\in \Phi} A\otimes_B L_i^B}{\bigoplus_{i\in \Phi} A\otimes_B L_i^B}$ are isomorphic vector spaces (here $\Phi$ denotes an indexing set for the simple $B$-modules).
\end{defi}
\begin{rem}
This is not the original definition of regularity for directed bocses, but it is a succinct equivalent characterisation that follows from Lemma 3.12 in \cite{doi:10.1112/blms.12331} and Corollary \ref{cor:main}.
\end{rem}

As a consequence of Theorem \ref{thm:main}, Corollary \ref{cor:main} and also Lemma \ref{lem:maini}, we obtain a rephrasing of Theorem 3.13 in \cite{doi:10.1112/blms.12331}.
\begin{thm}[{\cite[Theorem 3.13]{doi:10.1112/blms.12331}}]
	\label{thm:mainlast}
	Let $K$ be an algebraically closed field. There is a one-to-one correspondence between directed bocses and quasihereditary algebras with a homological exact Borel subalgebra. This assignment restricts to a bijection between regular directed bocses and quasihereditary algebras with a regular exact Borel subalgebra.
\end{thm}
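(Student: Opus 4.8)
The plan is to leverage the three new results of this note, together with the existing bijection of Brzeziński, Koenig and Külshammer, to streamline the original statement of \cite[Theorem 3.13]{doi:10.1112/blms.12331}. The original theorem matches \emph{normal} directed bocses with quasihereditary algebras having a \emph{normal} homological exact Borel subalgebra, and restricts this to a correspondence between \emph{regular normal} directed bocses and quasihereditary algebras with a \emph{regular normal} exact Borel subalgebra. Since we have just shown that every exact Borel subalgebra is normal (Theorem \ref{thm:main}) and every directed bocs is normal (Corollary \ref{cor:main}), each occurrence of the word ``normal'' in the original statement is redundant and may simply be deleted. First I would invoke Theorem \ref{thm:main} to identify the class of quasihereditary algebras with a normal homological exact Borel subalgebra with the class of quasihereditary algebras with a (any) homological exact Borel subalgebra, and Corollary \ref{cor:main} to identify normal directed bocses with all directed bocses; the bijection of \cite{doi:10.1112/blms.12331} then transports verbatim to the reformulated statement.

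The second sentence, concerning the restriction to regular objects, requires the extra input of Lemma \ref{lem:maini}. On the algebra side, I would use Lemma \ref{lem:maini}: since every regular exact Borel subalgebra is automatically normal (by Theorem \ref{thm:main}) and hence, by Lemma \ref{lem:maini}, homological, a regular exact Borel subalgebra is in particular a homological exact Borel subalgebra. Thus the restricted correspondence in the original theorem, phrased as ``regular normal exact Borel subalgebra'' inside the class of ``normal homological exact Borel subalgebras'', simplifies to ``regular exact Borel subalgebra'' inside the class of ``homological exact Borel subalgebras''. On the bocs side, I would appeal to the equivalent characterisation of regularity for directed bocses recorded in the Remark following Definition \ref{defi:borelprops}'s analogue above (the definition of regular directed bocs), which was shown to follow from \cite[Lemma 3.12]{doi:10.1112/blms.12331} together with Theorem \ref{thm:main}; this guarantees that ``regular normal directed bocs'' and ``regular directed bocs'' define the same class.

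The main obstacle, such as it is, is bookkeeping rather than mathematics: one must verify that removing ``normal'' from both sides of each bijection does not alter the underlying sets being matched, i.e.~that the classes ``directed bocses'' and ``normal directed bocses'' coincide (Corollary \ref{cor:main}), that ``homological exact Borel subalgebras'' and ``normal homological exact Borel subalgebras'' coincide (Theorem \ref{thm:main}), and likewise in the regular case. Since the map itself is unchanged and only its domain and codomain are being re-described as equal sets, the bijectivity is inherited directly from \cite[Theorem 3.13]{doi:10.1112/blms.12331}. I expect no genuine difficulty here; the content of the result is entirely absorbed into Theorem \ref{thm:main}, Corollary \ref{cor:main} and Lemma \ref{lem:maini}, and the proof amounts to citing \cite[Theorem 3.13]{doi:10.1112/blms.12331} and observing that the normality hypotheses have become vacuous.
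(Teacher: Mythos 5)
Your proposal coincides with the paper's own (implicit) argument: the paper states this theorem as an immediate consequence of Theorem \ref{thm:main}, Corollary \ref{cor:main} and Lemma \ref{lem:maini}, which is precisely the deduction you carry out, namely citing \cite[Theorem 3.13]{doi:10.1112/blms.12331} and observing that all normality hypotheses are vacuous. One trivial note: Lemma \ref{lem:maini} already shows that \emph{regular} implies \emph{homological} with no normality hypothesis, so your detour through Theorem \ref{thm:main} at that step is redundant, though harmless.
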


\bibliographystyle{amsplain}
\bibliography{QHAlg}

\end{document}